\journal{Ars Combinatoria}
\newtheorem{thm}{Theorem}[section]
\newtheorem{prop}[thm]{Proposition}
\newtheorem{cor}[thm]{Corollary}
\newtheorem{lem}[thm]{Lemma}
\newtheorem{conj}[thm]{Conjecture}
\def\BState{\State\hskip-\ALG@thistlm}
\DeclareMathOperator{\diam}{diam}
\DeclareMathOperator{\ddp}{dp}
\DeclareMathOperator{\DP}{DP}
\DeclareMathOperator{\dpp}{dp}
\DeclareMathOperator{\V}{V}
\begin{document}

\begin{frontmatter}

\title{Distance preserving graphs and graph products}

\author[label1]{M. H. Khalifeh}
\address[label1]{Department of Mathematics, Michigan State University, \\
              East Lansing, MI 48824-1027, U.S.A.}
\address[label2]{Department of Computer Science and Engineering,	Michigan State University\\
							East Lansing, MI 48824, U.S.A.}

\author[label1] {Bruce E. Sagan}

\author[label1,label2]{Emad Zahedi\corref{cor1}}

\ead{Zahediem@msu.edu}
\cortext[cor1]{I am corresponding author}

\begin{abstract}
If $G$ is a graph then a subgraph $H$ is {\em isometric} if, for every pair of vertices $u,v$ of $H$, we have $d_H(u,v) = d_G(u,v)$ where $d$ is the distance function. We say a graph $G$ is {\em distance preserving (dp)} if it has an isometric subgraph of every possible order up to the order of $G$. We give a necessary and sufficient condition for the lexicographic product of two graphs to be a dp graph. A graph $G$ is {\em sequentially distance preserving (sdp)} if the vertex set of $G$ can be ordered so that, for all $i\ge1$, deleting the first $i$ vertices in the sequence results in an isometric graph. We show that the Cartesian product of two graphs is sdp if and only if each of them is sdp. In closing, we state a conjecture concerning the Cartesian products of dp graphs. 
\end{abstract}

\begin{keyword}
Cartesian product, distance preserving graph, isometric subgraph, lexicographic product, sequentially distance preserving graph. \\
AMS subject classification (2015): 05C12
\end{keyword}

\end{frontmatter}


\section{Introduction}\label{section:1}
The computational complexity of exploring distance properties of large graphs such as real-world social networks which consist of millions of nodes can be extremely expensive. Recomputing distances in subgraphs of the original graph will add to the cost. One way to avoid this
is to use  subgraphs where the distance between any pair of vertices is the same as in the original graph. Such a subgraph is called {\em isometric}.
Isometric subgraphs come into play in network clustering~\cite{nussbaum2013clustering}.

One family of graphs which has been studied in the literature involving isometric subgraphs is the set of distance-hereditary graphs. A {\em distance-hereditary graph} is a connected graph in which every connected induced subgraph of $G$ is isometric. Distance-hereditary graphs have appeared in various papers~\cite{bandelt1986distance, damiand2001simple, hammer1990completely} since they were first described in an article of Howorka~\cite{howorka1977characterization}. Distance-hereditary graphs are known to be perfect graphs~\cite{golumbic2000clique,d1988distance}.

Another notion using isometric subgraphs is that of a distance preserving graph.  A connected graph is {\em distance preserving}, for which we use the abbreviation dp, if it has an isometric subgraph of every possible order. The definition of a distance-preserving graph is similar to the one for distance-hereditary graphs, but is less restrictive. Because of this, distance-preserving graphs can have a more complex structure than distance-hereditary ones. Distance-preserving graphs have also been studied in the literature. See, for example, \cite{esfahanian2014constructing,NF2,zahedidistance}. 

We will also consider a related notion defined as follows. A connected graph $G$ is {\em sequentially distance preserving (sdp)} if there is some  ordering $v_1,v_2,\dots, v_{|V(G)|}$ of the vertices of $G$ such that the subgraph $G-\{v_i\}_{i=1}^s$ is an isometric subgraph of $G$ for $1\leq s\leq |V(G)|$,~\cite{chepoi1998distance}. Obviously every distance-hereditary graph is sdp and every sdp graph is dp.  

The purpose of this paper is to investigate what happens to the dp and sdp properties when taking products of graphs. Graph products are operations which take two graphs $G$ and $H$ and produce a graph with vertex set $V(G)\times V(H)$ and  certain conditions on the edge set~\cite{imrich2000product}. We consider two kinds of such products, lexicographic product and Cartesian product. Various graph invariants of lexicographic products of graphs have been studied in the literature. See, e.g., \cite{anand2012convex,vcivzek1994chromatic,yang2013connectivity}. The Cartesian product is a well-known graph product, in part because of Vizing's Conjecture~\cite{vizing1963cartesian}, and has been considered by many authors such as~\cite{aurenhammer1992cartesian, caceres2007metric, khalifeh2009some, yousefi2008pi}. 

The outline of this paper is as follows. Section~\ref{section:2} gives full definitions for the main concepts we will need. Section~\ref{section:3} gives a necessary and sufficient condition for the lexicographic product of two graphs to be dp. This condition implies that if $G$ is dp then the lexicographic product of $G$ and any graph $H$ is dp. Moreover, all isometric subgraphs of the lexicographic product of two arbitrary graphs are characterized in this section. In the Section~\ref{section:4}, we will show that the Cartesian product of two graphs is sdp if and only if its factors are. We end with a conjecture about when the Cartesian product of graphs is dp.

\section{Preliminaries }\label{section:2}
In this paper every graph $G=(V,E)$ will be finite, undirected and simple. For ease of notation, we let  $|G|$ be the number of vertices of $G$. A sequence of vertices $u_0,u_1,\dots,u_l$ is a {\em walk of length l} if $u_{i-1}u_i\in E$
for   $1\le i\le l$.  The walk is a {\em path} if the $u_i$ are distinct.
The {\em distance} between vertices $u,v$ in $G,\ d_G(u,v),$ is the minimum length 
of a path connecting  $u$ and $v$. In the case of a disconnected graph $G$, we let $d_G(u,v) = \infty$ when there is no path between $u$ and $v$ in $G$. If the graph $G$ is clear from  context, we will use $d(u,v)$, instead of $d_G(u,v)$. A path $P$ from $u$ to $v$ with length $d(u,v)$ is called a $u$--$v$ {\em geodesic}. 

A subgraph $H$ of a graph $G$ is called an {\em isometric} subgraph, denoted $ H \leq G$, if $d_H(u,v)=d_G(u,v)$ 
for every pair of vertices $u,v \in V(H)$. A connected graph $G$ with $|G|=n$ is called {\em distance preserving (dp)} if it has an $i$-vertex isometric subgraph for every $1 \leq i \leq n$. A connected graph $G$ is called {\em sequentially distance preserving (sdp)} if there is an ordering $u_1,\dots, u_n$ of the vertices of $G$ such that $G - \{u_i\}_{i=1}^s\le G$ for $1\leq s\leq n$.  In this case we say that  $u_1,\dots,u_n$ is an {\em sdp sequence} for $G$.

The {\em lexicographic product} $G[H]$ of graphs $G$ and $H$ is the graph with vertex set $V(G)\times V(H)$ and edge set
$$
		E(G[ H]) =\{(u,x)(v,y)\ |\text{ $uv\in E(G)$, or $xy\in E(H)$ and $u=v$}\}.
$$	
The {\em Cartesian product} of $G$ and $H$ is the graph, denoted $G\ \Box\ H$, on the vertex set $V(G)\times V(H)$  whose edge set is
$$
		E(G\ \Box\ H) =\{(u,x)(v,y)\ |\text{ $uv\in E(G)$ and $x=y$, or $xy\in E(H)$ and $u=v$} \}.
$$
The reader can consult the book of Imrich and Klavzar~\cite{imrich2000product}, for more details about products.

\section{Lexicographic products of graphs}\label{section:3}
In this section we derive a necessary and sufficient condition for a connected graph $G[H]$ to be distance preserving. Furthermore we will find all the isometric subgraphs of $G[H]$. 

We first need a lemma about the distance function in $G[H]$ which is proved by Khalifeh et al.~\cite{khalifeh2008matrix}
\begin{lem}\label{L1}
Suppose $G$ is a graph with $|G|\ge2$ and $H$ is an arbitrary graph.

(a)
Let $G$ be connected.   For  distinct vertices $(u,x)$ and $(v,y)$ in $G[H]$, 
$$
d_{G[H]}\bigl((u,x),(v,y)\bigl) =
	\begin{cases}
		d_G(u,v) & if\ u\neq v,      \\
		2 & if\ u=v,\ xy\notin E(H),  \\ 
		1 & if\ u=v,\   xy\in E(H).   \\ 
	\end{cases}
$$

 (b)
The graph $G[H]$ is connected if and only if $G$ is connected.
\end{lem}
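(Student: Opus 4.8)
The plan is to establish (a) by a direct case analysis and then deduce (b) from it together with a simple projection argument. The one tool I would set up first is the projection $\pi\colon V(G[H])\to V(G)$ given by $\pi(u,x)=u$. From the definition of $E(G[H])$, if $(u,x)(v,y)$ is an edge of $G[H]$ then either $uv\in E(G)$ or $u=v$, so in all cases $d_G(\pi(u,x),\pi(v,y))\le 1$. Hence any walk of length $\ell$ in $G[H]$ joining $(u,x)$ to $(v,y)$ maps under $\pi$ to a walk of length at most $\ell$ in $G$ joining $u$ to $v$, which immediately yields the lower bound $d_{G[H]}((u,x),(v,y))\ge d_G(u,v)$ whenever $u\ne v$.

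For (a) I would treat the three cases in the stated order. If $u=v$ and $xy\in E(H)$, then $(u,x)(u,y)\in E(G[H])$ straight from the definition, so the distance is $1$. If $u=v$ and $xy\notin E(H)$, then $(u,x)(u,y)\notin E(G[H])$ (there is no loop at $u$ in $G$, and $xy\notin E(H)$), so the distance is at least $2$; since $G$ is connected with $|G|\ge 2$, there is a vertex $w$ with $uw\in E(G)$, and then $(u,x),(w,z),(u,y)$ is a walk of length $2$ for any $z\in V(H)$, so the distance equals $2$. Finally, if $u\ne v$, the lower bound above gives $d_{G[H]}((u,x),(v,y))\ge d_G(u,v)$; for the matching upper bound I would take a $u$--$v$ geodesic $u=w_0,w_1,\dots,w_k=v$ in $G$ with $k=d_G(u,v)\ge 1$, choose arbitrary $z_1,\dots,z_{k-1}\in V(H)$, and observe that $(w_0,x),(w_1,z_1),\dots,(w_{k-1},z_{k-1}),(w_k,y)$ is a walk of length $k$ in $G[H]$ because consecutive first coordinates are adjacent in $G$. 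Combining the two bounds gives the claimed value.

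For (b), the direction ``$G$ connected $\Rightarrow$ $G[H]$ connected'' is immediate from (a), since then any two distinct vertices of $G[H]$ lie at finite distance. For the converse, if $G$ is disconnected I would pick $u,u'$ in distinct components of $G$: by the projection argument, any walk in $G[H]$ from $(u,x)$ to $(u',x')$ would project to a walk in $G$ from $u$ to $u'$, which cannot exist, so $G[H]$ is disconnected. I do not expect a real obstacle here, as the statement is elementary; the only place that needs a little care is the case $u=v$, $xy\notin E(H)$, where both hypotheses $|G|\ge 2$ and connectedness of $G$ are used to produce the length-$2$ walk (and one may note that in the $u\ne v$ case the constructed walk is actually a path, since its first coordinates are distinct, though a walk already suffices for bounding the distance).
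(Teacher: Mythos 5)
Your proof is correct and follows essentially the same route as the paper's: lift a $u$--$v$ geodesic of $G$ to a walk in $G[H]$ for the upper bound, use the projection of walks for the lower bound and for the converse of (b), and handle the $u=v$ cases via the neighbor $w$ guaranteed by connectedness and $|G|\ge 2$. Your version is slightly more explicit about why $d_{G[H]}((u,x),(v,y))\ge d_G(u,v)$, which the paper leaves implicit in its claimed geodesic correspondence, but the argument is the same.
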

In order to state the main theorem of this section, we need some notation. Let
$$ 
\dpp(G) = \{k\ \big|\text{ $G$ has an isometric subgraph with $k$ vertices}\}.
$$
If $a,b$ are integers  with $a<b$,  then let
$ [a,b]=\{ a,a+1,a+2,\dots,b\} $. 
So a graph $ G $ is dp if and only if  $ \dpp(G)=[1,|G|]$.
Two elements  $a,b \in \dpp(G)$ {\em bound a non-dp interval} if the set of integers $c$ with $a<c<b$ is nonempty and consists only of elements not in $\dpp(G)$.

Finally, the {\em projection} of a subgraph $K$ of $G[H]$, denoted $\pi(K)$,  is the induced subgraph of $G$ whose vertex set is
$$
 \V \big( \pi (K) \big) = \{u\ \big|\text{ $(u,x)$ is a vertex of $K$} \}.
$$

\begin{thm}\label{th1}
Let $G$ be a connected graph with $|G|\ge2$ and $H$ be an arbitrary graph with $|H|=n$. Then 
$$ 
G[H] \text{\  is dp if and only if \  } b\leq an+1 
$$ 
for every pair $a,b \in \dpp(G)$ bounding a non-dp interval.
\end{thm}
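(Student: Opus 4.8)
The plan is to first reduce to induced subgraphs and then to describe all isometric subgraphs of $G[H]$ through their projections. If $K\le G[H]$ then the induced subgraph of $G[H]$ on $\V(K)$ is also isometric and has the same order, since adding edges cannot increase distances while $K$ already realizes the distances of $G[H]$; so it suffices to consider induced $K$. The technical heart of the argument is the following lemma, which I would prove next: \emph{if $K$ is a connected induced subgraph of $G[H]$ with $|\pi(K)|\ge 2$, then $K\le G[H]$ if and only if $\pi(K)\le G$.} For the ``if'' direction, given $(u,x),(v,y)\in\V(K)$ with $u\ne v$, one lifts a $u$--$v$ geodesic of $\pi(K)$ (which has length $d_G(u,v)$ by hypothesis) to a path of the same length in $K$, using that each fibre over a vertex of $\pi(K)$ is nonempty and that $K$ is induced; when $u=v$ and $xy\notin E(H)$ one instead uses Lemma~\ref{L1} together with the fact that $\pi(K)$ is connected with at least two vertices, so $u$ has a neighbour $w$ in $\pi(K)$ and any vertex of $K$ in the fibre over $w$ is a common neighbour of $(u,x)$ and $(u,y)$, giving $d_K\bigl((u,x),(u,y)\bigr)=2$. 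For the ``only if'' direction, one projects a shortest $(u,x)$--$(v,y)$ path of $K$ to a $u$--$v$ walk in $\pi(K)$ of no greater length, whence $d_{\pi(K)}(u,v)\le d_K\bigl((u,x),(v,y)\bigr)=d_G(u,v)\le d_{\pi(K)}(u,v)$, forcing equality.

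Second, I would use the lemma to compute $\dpp(G[H])$. Since $G$ is connected with $|G|\ge 2$ we have $1,2\in\dpp(G)$ (a vertex, respectively an edge) and $|G|\in\dpp(G)$. Given $a\in\dpp(G)$ with $a\ge 2$, fix an isometric subgraph $G'$ of $G$ on $a$ vertices; for any target order $k$ with $a\le k\le an$, write $k$ as a sum of $a$ integers each in $[1,n]$ and build the induced subgraph $K$ with $\pi(K)=G'$ whose fibres have exactly those sizes. Then $K$ is connected with $|\pi(K)|=a\ge 2$ and $\pi(K)\le G$, so $K\le G[H]$ by the lemma, giving $[a,an]\subseteq\dpp(G[H])$. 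Conversely, if $k\in\dpp(G[H])$ with $k\ge 2$, pick an isometric induced $K$ of order $k$: if $|\pi(K)|\ge 2$ then $a:=|\pi(K)|\in\dpp(G)$ by the lemma and $a\le k\le an$, while if $|\pi(K)|=1$ then $K$ lies inside one fibre, so $2\le k\le n\le 2n$ and $k$ already lies in $[2,2n]$. Hence
$$
\dpp(G[H])=\{1\}\cup\bigcup_{\substack{a\in\dpp(G)\\ a\ge 2}}[a,an].
$$

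Finally, I would translate the dp condition. The graph $G[H]$ is dp exactly when this set equals $[1,|G|n]=[1,|G[H]|]$. Since $2\in\dpp(G)$ and $|G|\in\dpp(G)$, the set contains $[2,2n]$ and $[|G|,|G|n]$, so together with $\{1\}$ it is an interval ending at $|G|n$ if and only if every pair of consecutive members $a<b$ of $\dpp(G)$ with $a\ge 2$ satisfies $b\le an+1$; when $b=a+1$ this holds automatically because $n\ge 1$, so the only pairs imposing a real constraint are the consecutive $a,b\in\dpp(G)$ with $b\ge a+2$, i.e. exactly the pairs bounding a non-dp interval (a pair $1,b$ never bounds one, since $2\in\dpp(G)$). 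For the failure direction one checks that if some such pair has $b>an+1$, then $an+1$ lies in $[1,|G|n]$ but not in $\dpp(G[H])$. This gives the claimed equivalence.

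I expect the main obstacle to be the careful proof of the lemma, especially the same-fibre non-adjacent case where connectivity of $\pi(K)$ must be invoked to supply a common neighbour, together with the bookkeeping checking that subgraphs projecting onto a single vertex contribute no orders beyond those already forced by the intervals $[a,an]$ with $a\ge 2$.
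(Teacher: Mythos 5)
Your proposal is correct and follows essentially the same route as the paper: the key step in both is the equivalence $\pi(K)\leq G \Leftrightarrow K\leq G[H]$ for induced $K$ with $|\pi(K)|\ge 2$, followed by computing $\dpp(G[H])$ as a union of intervals $[a,an]$ and translating the interval condition into $b\le an+1$. You are in fact somewhat more careful than the paper at two points it glosses over --- realizing every order in $[a,an]$ by prescribing fibre sizes, and checking that subgraphs with $|\pi(K)|=1$ contribute nothing beyond $[2,2n]$.
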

\begin{proof}
We claim, for an induced subgraph $K$ of $G[H]$ with $\pi(K)$ having at least two vertices,
\begin{equation}\label{pi(K)}
 \text {$\pi(K) \leq G$ if and only if $K \leq G[H]$.} 
\end{equation} 

To prove the forward direction of the claim, assume that $\pi(K) \leq G$ and consider
distinct vertices $(u, x), (v, y)\in V(K)$. 
If $u\neq v$ then, using the same ideas as in the proof of the first case in Lemma~\ref{L1}(a), we see that 
$d_{\pi(K)}(u,v)=d_K\bigl((u, x), (v, y)\bigl)$. Using $\pi(K) \leq G$ and the lemma itself gives
$$
d_K\bigl((u, x), (v, y)\bigl)=d_{\pi(K)}(u,v)=d_G(u,v)=d_{G[H]}\bigl((u, x), (v, y)\bigl)
$$
as desired.
 If $u = v$ and $xy\not\in E(H)$, then a similar proof shows that $d_K\bigl((u,x),(v,y)\bigl) =2= d_{G[H]}\bigl((u,x),(v,y)\bigl)$.
Finally, if $u = v$ and $xy\in E(H)$, since $K$ is induced we have $d_K\bigl((u,x),(v,y)\bigl) =1= d_{G[H]}\bigl((u,x),(v,y)\bigl)$.

Conversely, if $K\leq G[H]$, then we must show
$$
d_{\pi(K)}(u,v) = d_{G}(u,v) 
$$
for any two distinct vertices $u,v$ in $\pi(K)$. Again using the ideas in the proof of the first case in Lemma~\ref{L1}(a), we see that 
$d_{\pi(K)}(u,v)=d_K\bigl((u, x), (v, y)\bigl)$ for any $x,y\in V(H)$. Using $K\leq G[H]$ and the lemma itself, we have
$$
d_{\pi(K)}(u,v)=d_K\bigl((u, x), (v, y)\bigl)=d_{G[H]}\bigl((u, x), (v, y)\bigl)=d_G(u,v).
$$

To prove the theorem suppose that $|\pi(K)| = c$, $|G|=m$ and $|H|=n$ 
so that $|G[H]|=mn$.
By definition of projection $c\le |K|\le cn$.   
Also  every connected graph with at least two vertices has  isometric subgraphs with one  vertex and with two vertices.  So by  equation~(\ref{pi(K)}), $G[H]$ will be dp if and only if 
$$
\bigcup_{c\in\dpp(G)} [c,cn]=[1,mn].
$$
Since $1,2,m\in\dpp(G)$, the last equality is equivalent to $[a,an]\cup[b,bn]$ being an interval for every pair $a,b\in\dpp(G)$ bounding a non-dp interval.  But this is equivalent to $b\le an+1$.
\end{proof}

The next result is an immediate corollary of the previous theorem.
 
\begin{cor}
If $G$ is $dp$ with $|G|\ge2$ then so is $G[H]$ for any graph $H$.
\end{cor}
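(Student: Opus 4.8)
The plan is simply to unwind the definitions and invoke Theorem~\ref{th1}. First I would recall that, by the very definition of dp, the hypothesis ``$G$ is dp with $|G|\ge2$'' means precisely that $\dpp(G)=[1,|G|]$ and that $G$ is connected with at least two vertices. In particular $G[H]$ is connected by Lemma~\ref{L1}(b), so it is meaningful to ask whether $G[H]$ is dp, and the hypotheses of Theorem~\ref{th1} ($G$ connected, $|G|\ge2$, $H$ arbitrary) are in force.

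Next I would observe that the condition appearing in Theorem~\ref{th1} is \emph{vacuously} satisfied. For a pair $a,b\in\dpp(G)$ to bound a non-dp interval, the set of integers $c$ with $a<c<b$ must be nonempty and must contain no element of $\dpp(G)$. But when $G$ is dp, every integer between $1$ and $|G|$ lies in $\dpp(G)$, and any such $c$ is in particular one of these integers; hence no pair $a,b\in\dpp(G)$ can bound a non-dp interval. Therefore the requirement ``$b\le an+1$ for every pair $a,b\in\dpp(G)$ bounding a non-dp interval'' holds with nothing to check, and Theorem~\ref{th1} yields that $G[H]$ is dp.

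There is essentially no obstacle here, which is why this is recorded as an immediate corollary; the only points deserving a remark are the trivial verification that Theorem~\ref{th1}'s hypotheses hold and the degenerate case $|H|=1$, where $G[H]\cong G$ is dp by assumption anyway. If one prefers not to route through the ``non-dp interval'' reformulation, an equivalent route is to read off the displayed union in the proof of Theorem~\ref{th1}: when $\dpp(G)=[1,m]$ one has $\bigcup_{c\in[1,m]}[c,cn]=[1,mn]$, because consecutive blocks $[c,cn]$ and $[c+1,(c+1)n]$ overlap (indeed $c+1\le cn+1$ for $n\ge1$), so the union is the single interval $[1,mn]=[1,|G[H]|]$, i.e.\ $G[H]$ is dp.
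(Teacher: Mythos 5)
Your proof is correct and matches the paper's intent: the paper gives no argument beyond calling this ``an immediate corollary of the previous theorem,'' and your observation that the condition of Theorem~\ref{th1} is vacuously satisfied when $\dpp(G)=[1,|G|]$ is exactly the intended reasoning.
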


Similarly, the next result follows easily from Lemma~\ref{L1} and equation~(\ref{pi(K)}).
 
\begin{cor}
For a connected graph $G$ with $|G|\ge2$ and an induced subgraph $K$ of $G[H]$, 

$$
K\leq G[H] \text{ if and only if }
	\begin{cases}
		\pi(K)\leq G & \text{if}\ |\pi(K)|\ge2, \\
		\diam(K)\le 2 & \text{if}\ |\pi(K)|=1.
	\end{cases}
$$
\end{cor}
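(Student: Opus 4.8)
The plan is to prove this corollary by combining Lemma~\ref{L1} with the characterization of isometric subgraphs given in equation~(\ref{pi(K)}), splitting on the size of the projection $\pi(K)$. Since the statement is a biconditional in two regimes, I would handle the case $|\pi(K)|\ge2$ first and the case $|\pi(K)|=1$ second.

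For the case $|\pi(K)|\ge2$, there is essentially nothing left to do: equation~(\ref{pi(K)}) in the proof of Theorem~\ref{th1} already asserts exactly that $K\leq G[H]$ if and only if $\pi(K)\leq G$ for any induced subgraph $K$ of $G[H]$ whose projection has at least two vertices. So I would simply cite it.

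For the case $|\pi(K)|=1$, the projection consists of a single vertex $u$, so $V(K)\subseteq \{u\}\times V(H)$; that is, $K$ lies entirely inside one "fiber" $\{u\}\times V(H)$. By Lemma~\ref{L1}(a), any two distinct vertices $(u,x),(u,y)$ of $G[H]$ in this fiber are at distance $1$ or $2$ in $G[H]$ according as $xy\in E(H)$ or not; in particular $d_{G[H]}\bigl((u,x),(u,y)\bigr)\le2$ always. Now if $\diam(K)\le2$, I would check $K\leq G[H]$ directly: for $(u,x),(u,y)\in V(K)$ distinct, if $xy\in E(H)$ then since $K$ is induced $d_K\bigl((u,x),(u,y)\bigr)=1=d_{G[H]}\bigl((u,x),(u,y)\bigr)$; otherwise $d_{G[H]}\bigl((u,x),(u,y)\bigr)=2$ by the lemma, and $2\le d_K\bigl((u,x),(u,y)\bigr)\le\diam(K)\le2$, so equality holds. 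Conversely, if $K\leq G[H]$ then for any two vertices of $K$ we have $d_K = d_{G[H]}\le2$, whence $\diam(K)\le2$.

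I do not expect any serious obstacle here; the corollary is genuinely routine once Theorem~\ref{th1}'s internal claim~(\ref{pi(K)}) and Lemma~\ref{L1} are in hand. The only point requiring a little care is making sure the single-vertex-projection case is treated separately, since equation~(\ref{pi(K)}) was only established under the hypothesis $|\pi(K)|\ge2$ (a fiber $\{u\}\times V(H)$ need not be an isometric subgraph of $G[H]$ when $H$ itself has large diameter, even though its projection, a single vertex, is trivially isometric in $G$) — this is exactly why the $\diam(K)\le2$ condition must replace the projection condition in that regime.
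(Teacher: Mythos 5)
Your proof is correct and follows exactly the route the paper intends: the paper gives no explicit argument, merely remarking that the corollary ``follows easily from Lemma~\ref{L1} and equation~(\ref{pi(K)}),'' and your two-case treatment (citing equation~(\ref{pi(K)}) when $|\pi(K)|\ge2$ and using Lemma~\ref{L1}(a) on a single fiber when $|\pi(K)|=1$) is precisely that easy derivation, with the single-fiber case worked out carefully.
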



\section{Cartesian product graphs}\label{section:4}

We now turn to Cartesian products and the sdp property. We first need some  notation and a few well-known results. 
A {\em removal set} in $G$ is a set of vertices of $G$ whose removal gives an isometric subgraph, let
$$ 
\DP'(G)=\bigl\{ A\subseteq V(G)\,\big|\, G-A \le G\bigl\}\ \ \text{  and }\ \ \ddp'(G)=\bigl\{ |A|\,\big|\, A \in \DP'(G)\bigl\}.
$$

\begin{prop}~\cite{brevsar2008geodetic}\label{proposition:1}
Suppose $G$ and $H$ are graphs,

(a) If $(u,x)$ and $(v,y)$ are vertices of a Cartesian product $G\ \Box\ H $ then
	\begin{align*}
		d_{G\ \Box\ H}\bigl((u,x),(v,y)\bigl)=d_G(u,v)+d_H(x,y).
	\end{align*}

(b) A path $(u_0,x_0)\dots (u_l,x_l)$ is geodesic in $G\ \Box\ H $ if and only if $u_0\dots u_l$ is a geodesic in $G$ after removal of repeated vertices and similarly for
 $x_0\dots x_l$  in $H$.
\end{prop}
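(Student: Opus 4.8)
The plan is to prove both parts by the standard coordinate-projection argument, establishing~(a) first and then deriving~(b) from it. For the upper bound in~(a), I would take a $u$--$v$ geodesic $u=a_0,a_1,\dots,a_p=v$ in $G$ with $p=d_G(u,v)$ and an $x$--$y$ geodesic $x=b_0,b_1,\dots,b_q=y$ in $H$ with $q=d_H(x,y)$, and splice them into the walk
\[
(u,x)=(a_0,x),(a_1,x),\dots,(a_p,x)=(v,x),(v,b_1),(v,b_2),\dots,(v,b_q)=(v,y)
\]
in $G\ \Box\ H$. By the definition of the Cartesian product each consecutive pair here is an edge, so this is a walk of length $p+q$ and therefore $d_{G\ \Box\ H}\bigl((u,x),(v,y)\bigr)\le d_G(u,v)+d_H(x,y)$.

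For the matching lower bound I would use the observation that every edge of $G\ \Box\ H$ changes exactly one of the two coordinates. Given any $(u,x)$--$(v,y)$ walk $(w_0,z_0),(w_1,z_1),\dots,(w_\ell,z_\ell)$ in $G\ \Box\ H$, the sequence $w_0,w_1,\dots,w_\ell$ has consecutive entries that are equal or adjacent in $G$, so collapsing consecutive repetitions yields a $u$--$v$ walk in $G$ whose length $\ell_1$ equals the number of steps that move the first coordinate; symmetrically one gets an $x$--$y$ walk in $H$ of length $\ell_2$, and $\ell_1+\ell_2=\ell$. Since $\ell_1\ge d_G(u,v)$ and $\ell_2\ge d_H(x,y)$, this gives $\ell\ge d_G(u,v)+d_H(x,y)$, and together with the upper bound proves~(a).

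For~(b), let $P=(u_0,x_0)\dots(u_l,x_l)$ be a path in $G\ \Box\ H$. As in the lower-bound argument, collapsing consecutive repetitions in the first coordinates of $P$ produces a $u_0$--$u_l$ walk $W_G$ in $G$ of length $a$, where $a$ is the number of steps of $P$ moving the first coordinate, and likewise a walk $W_H$ in $H$ of length $b$, with $a+b=l$. By~(a), $P$ is geodesic if and only if $l=d_G(u_0,u_l)+d_H(x_0,x_l)$. Since $a\ge d_G(u_0,u_l)$, $b\ge d_H(x_0,x_l)$, and $a+b=l$, that equality is equivalent to $a=d_G(u_0,u_l)$ and $b=d_H(x_0,x_l)$; and a walk of minimum length has no repeated vertex, hence is a geodesic path. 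As $W_G$ is exactly the sequence $u_0\dots u_l$ with repeated (i.e.\ consecutively equal) vertices removed, and similarly for $W_H$, this says precisely that $P$ is geodesic iff $u_0\dots u_l$ reduces to a geodesic in $G$ and $x_0\dots x_l$ reduces to a geodesic in $H$.

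I expect the only delicate point to be the bookkeeping in the ``collapse consecutive repetitions'' step: one must check that a walk in $G\ \Box\ H$ genuinely projects onto walks in the two factors whose lengths sum to the original length, and keep straight that ``removal of repeated vertices'' in the statement refers to consecutive repetitions. The remaining content---additivity of the two lower bounds and the fact that a shortest walk is automatically a path---is routine.
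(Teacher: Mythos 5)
Your proof is correct. The paper states this proposition as a known result cited from the literature and gives no proof of its own; your argument is the standard one (concatenated geodesics for the upper bound, coordinate projection for the lower bound, and the equality-case analysis for part~(b)), and the delicate points you flag --- that each edge moves exactly one coordinate so the projected walk lengths sum to $l$, and that a walk of length equal to the distance is automatically a geodesic path --- are handled correctly.
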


Next we consider isometric Cartesian product subgraphs of a Cartesian product graph.
 
\begin{lem}\label{lemma:1}
Suppose $G'$ and $H'$ are nonempty subgraphs of $G$ and $H$ respectively, then
	$G'\ \Box\ H'\leq G\ \Box\ H$ if and only if $G'\leq G$ and $H'\leq H$.
\end{lem}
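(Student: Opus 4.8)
The plan is to reduce both directions to the additive distance formula of Proposition~\ref{proposition:1}(a), which says that a distance in a Cartesian product splits as the sum of the distances in the two factors. This identity holds for the abstract graphs $G'\ \Box\ H'$ and $G\ \Box\ H$ irrespective of connectivity, with the usual convention $\infty+k=\infty$, so no extra hypotheses are needed to invoke it.

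For the forward direction, I would assume $G'\ \Box\ H'\le G\ \Box\ H$ and prove $G'\le G$, the argument for $H'\le H$ being symmetric. The key is to restrict attention to a single fiber: since $H'$ is nonempty, fix a vertex $x\in V(H')$. For any $u,v\in V(G')$ the pairs $(u,x),(v,x)$ lie in $V(G'\ \Box\ H')$, and Proposition~\ref{proposition:1}(a) gives
$$
d_{G'\ \Box\ H'}\bigl((u,x),(v,x)\bigr)=d_{G'}(u,v)+d_{H'}(x,x)=d_{G'}(u,v),
$$
and likewise $d_{G\ \Box\ H}\bigl((u,x),(v,x)\bigr)=d_{G}(u,v)$. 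The hypothesis $G'\ \Box\ H'\le G\ \Box\ H$ forces these to agree, hence $d_{G'}(u,v)=d_{G}(u,v)$ for all $u,v\in V(G')$, i.e.\ $G'\le G$.

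For the converse, assume $G'\le G$ and $H'\le H$ and take arbitrary vertices $(u,x),(v,y)$ of $G'\ \Box\ H'$. Applying Proposition~\ref{proposition:1}(a) first in $G'\ \Box\ H'$, then the isometry hypotheses on each factor, then Proposition~\ref{proposition:1}(a) again in $G\ \Box\ H$ yields
$$
d_{G'\ \Box\ H'}\bigl((u,x),(v,y)\bigr)=d_{G'}(u,v)+d_{H'}(x,y)=d_{G}(u,v)+d_{H}(x,y)=d_{G\ \Box\ H}\bigl((u,x),(v,y)\bigr),
$$
which is precisely the statement that $G'\ \Box\ H'$ is isometric in $G\ \Box\ H$.

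Each direction is thus a short computation. The only point requiring care — and the step I would single out as the main (minor) obstacle — is the forward direction: one must remember to fix a coordinate in the nonempty factor $H'$ so that the product distance collapses to the distance in $G'$ alone, and it is exactly for the existence of such fibers that the nonemptiness of $G'$ and $H'$ is used.
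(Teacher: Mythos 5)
Your proof is correct and follows the same overall strategy as the paper: both directions reduce to the additive distance formula of Proposition~\ref{proposition:1}(a), and your converse is word-for-word the paper's computation. The only genuine divergence is the final step of the forward direction. The paper keeps $(u,x)$ and $(v,y)$ fully general, obtains $d_{G'}(u,v)+d_{H'}(x,y)=d_{G}(u,v)+d_{H}(x,y)$, and then splits this single equality into the two desired ones using the monotonicity fact that a distance in a subgraph is at least the corresponding distance in the ambient graph, so neither summand can strictly exceed its counterpart. You instead collapse the second summand by fixing a fiber coordinate $x=y\in V(H')$, so that $d_{H'}(x,x)=0=d_{H}(x,x)$ and the product distance literally equals $d_{G'}(u,v)$ on one side and $d_{G}(u,v)$ on the other; you then repeat symmetrically for $H'$. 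Both arguments are sound and equally short. Yours makes the role of the nonemptiness hypotheses more visible (one needs a vertex of the other factor to restrict to a fiber), and it sidesteps any worry about cancelling terms in sums that could be infinite; the paper's version extracts both conclusions $G'\le G$ and $H'\le H$ from one displayed chain rather than running the argument twice.
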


\begin{proof} For the forward direction using the assumption and proposition~\ref{proposition:1}(a) we have 
\begin{align*}
d_{G'}(u,v) +d_{H'}(x,y)&  = d_{G'\ \Box\ H'}\bigl((u,x),(v,y)\bigl)\\ &
= d_{G\ \Box\ H}\bigl((u,x),(v,y)\bigl)\\ &
 =d_{G}(u,v)+ d_{H}(x,y),
\end{align*}
for every pair of vertices $(u,x),(v,y)\in V(G'\ \Box\ H')$. As any distance in a subgraph is greater than or equal to the corresponding distance in the original graph, we get $d_{G'}(u,v)=d_{G}(u,v)$ and $d_{H'}(x,y) = d_{H}(x,y)$.

Conversely, suppose $G'$ and $H'$ are isometric subgraphs,  by proposition~\ref{proposition:1}(a) we have
\begin{align*}
d_{G'\ \Box\ H'}\bigl((u,x),(v,y)\bigl) &
= d_{G'}(u,v)+d_{H'}(x,y)\\&
= d_{G}(u,v)+d_{H}(x,y)\\ &
 = d_{G\ \Box\ H}\bigl((u,x),(v,y)\bigl),
\end{align*}	
for each pair of vertices $(u,x),(v,y)\in V(G'\ \Box\ H')$. This complete the proof. 
\end{proof}

We now prove a lemma about removal sets of vertices. 

\begin{lem}\label{lemma:2}
For nonemty subsets $A$ and $B$ in the vertex set of graphs $G$ and $H$ respectively,  
$A\times B\in \DP'(G\ \Box\  H)$ if and only if $A\in \DP'(G)$ and $B\in \DP'(H)$.
\end{lem}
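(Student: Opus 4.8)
The plan is to prove both implications from the description of Cartesian‑product geodesics in Proposition~\ref{proposition:1}, using throughout the following ``L‑shaped routing'': if $u_0\cdots u_k$ is a $u$--$v$ geodesic of $G$ and $w_0\cdots w_\ell$ is an $x$--$y$ geodesic of $H$, then laying the first sequence down at $H$‑level $x$ and following it with the second at $G$‑level $v$,
$$(u,x)=(u_0,x),(u_1,x),\dots,(u_k,x)=(v,x),(v,w_1),\dots,(v,w_\ell)=(v,y),$$
is a $(u,x)$--$(v,y)$ geodesic of $G\ \Box\ H$ by Proposition~\ref{proposition:1}(b), and symmetrically with the two factors interchanged. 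The whole game is to arrange such a route so that it avoids the block $A\times B$.

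For the forward direction I would assume $(G\ \Box\ H)-(A\times B)\le G\ \Box\ H$ and fix any $b\in B$ (possible as $B\ne\emptyset$). For $u,v\in V(G)\setminus A$ the vertices $(u,b),(v,b)$ lie outside $A\times B$, so there is a shortest path $P$ joining them inside $(G\ \Box\ H)-(A\times B)$; since that subgraph is isometric, $P$ has length $d_{G\ \Box\ H}\bigl((u,b),(v,b)\bigr)=d_G(u,v)$ by Proposition~\ref{proposition:1}(a), hence $P$ is a geodesic of $G\ \Box\ H$. By Proposition~\ref{proposition:1}(b) the $H$‑coordinates along $P$ reduce to a geodesic from $b$ to $b$, which has length $0$, so every vertex of $P$ has $H$‑coordinate $b$; thus $P$ lies in the layer $G\times\{b\}$ and its $G$‑coordinates form a $u$--$v$ path $Q$ in $G$ of length $d_G(u,v)$. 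Since $P$ misses $A\times B$ and $b\in B$, the path $Q$ misses $A$, so $d_{G-A}(u,v)=d_G(u,v)$. As $u,v$ were arbitrary, $G-A\le G$, i.e.\ $A\in\DP'(G)$; fixing instead $a\in A$ and arguing in the layers $\{a\}\times H$ gives $B\in\DP'(H)$ symmetrically.

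For the reverse direction I would assume $G-A\le G$ and $H-B\le H$ and take $(u,x),(v,y)\notin A\times B$; it suffices to build a $(u,x)$--$(v,y)$ geodesic of $G\ \Box\ H$ avoiding $A\times B$, since then $d_{(G\ \Box\ H)-(A\times B)}\bigl((u,x),(v,y)\bigr)=d_{G\ \Box\ H}\bigl((u,x),(v,y)\bigr)$. If $v\notin A$, route through $(v,x)$ as above: the second stretch uses only vertices $(v,\cdot)$ and is safe; the first stretch is automatically safe when $x\notin B$, while if $x\in B$ then $(u,x)\notin A\times B$ forces $u\notin A$, so with $v\notin A$ as well we may choose the $u$--$v$ geodesic inside $G-A$, again avoiding $A\times B$. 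If $v\in A$, then $(v,y)\notin A\times B$ forces $y\notin B$, and the mirror route through $(u,y)$ — an $x$--$y$ geodesic of $H$ at $G$‑level $u$ (taken inside $H-B$ in the subcase $u\in A$, which by $(u,x)\notin A\times B$ forces $x\notin B$ so this is legal), followed by a $u$--$v$ geodesic of $G$ at $H$‑level $y$ — is a geodesic by Proposition~\ref{proposition:1}(b) and avoids $A\times B$. This covers every case.

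I expect the only delicate point to be the case split in the reverse direction: because $A\times B$ sits in the ``interior'' of the grid $V(G)\times V(H)$ rather than along a full fiber, no single L‑shaped route works for all endpoint pairs, and one must check that the dichotomy ``$v\in A$ or not'' — together with the constraints it forces on $x$ or $y$ through $(u,x),(v,y)\notin A\times B$ — always leaves at least one safe route. Everything else is routine bookkeeping with Proposition~\ref{proposition:1} and the definition of $\DP'$.
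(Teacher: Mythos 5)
Your proof is correct and follows essentially the same strategy as the paper: the forward direction restricts to a single layer $G\times\{b\}$ and projects a geodesic of the isometric subgraph back to $G-A$, and the backward direction builds an L-shaped geodesic (via Proposition~\ref{proposition:1}) whose corner is chosen to avoid the block $A\times B$. The only difference is bookkeeping: the paper writes $(G\ \Box\ H)-(A\times B)=((G-A)\ \Box\ H)\cup(G\ \Box\ (H-B))$ and uses Lemma~\ref{lemma:1} to dispatch the case where both endpoints lie in the same piece, whereas you handle all endpoint configurations by a direct case split on membership of $u,v$ in $A$ and $x,y$ in $B$ --- the two organizations are equivalent.
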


\begin{proof}
To prove the forward direction, we show $A\in \DP'(G)$ as $B\in \DP'(H)$ is similar.  
Let $u,v\in V(G-A)$ and $x\in B$.  By  Proposition~\ref{proposition:1}(b), the $(u,x)$--$(v,x)$ geodesics in 
$(G-A)\ \Box\ B$ are the same as the geodesics in $(G\ \Box\ H)-(A\times B)$.
Now using this fact,  Proposition~\ref{proposition:1}(a), and the assumption in this direction
$$
  d_{G-A}(u,v) 
    = d_{(G-A)\ \Box\ B}\bigl((u,x),(v,x)\bigl)
    = d_{(G\ \Box\ H)-(A\times B)}\bigl((u,x),(v,x)\bigl)
  = d_{G\ \Box\ H}\bigl((u,x),(v,x)\bigl),
$$
Finally, applying  Proposition~\ref{proposition:1}(a) again shows  that the last distance equals $d_G(u,v)$
as desired.

To see the backward direction, first note that 
$(G\ \Box\  H) - (A\times B) = ((G-A)\ \Box\  H)\cup (G\ \Box\  (H-B))$.
So it suffices to show that 
$$
d_{(G\ \Box\ H)-(A\times B)}\bigl((u,x),(v,y)\bigl) = d_{G\ \Box\  H}\bigl((u,x),(v,y)\bigl)
$$
for any $(u,x)$ in $(G-A)\ \Box\ H$ and $(v,y)$ in $G\ \Box\ (H-B)$ since Lemma~\ref{lemma:1} takes care of the other possibilities.  Clearly there is a path
$(u,x),\dots ,(u,y)$ with length  $d_H(x,y)$ in $(G-A)\ \Box\ H$, and also
$(u,y),\dots ,(v,y)$ with length  $d_G(u,v)$ in $G\ \Box\ (H-B)$.
The concatenation of these paths is a path from $(u,x)$ to $(v,y)$ in $(G\ \Box\ H)-(A\times B$)
of length $d_G(u,v)+d_H(x,y)=  d_{G\ \Box\  H}\bigl((u,x),(v,y)\bigl)$ and so must be a geodesic.  This concludes the proof.
\end{proof}

We are now in a position to prove the main theorem of this section.

 \begin{thm}\label{thm:2} 
The product $G\ \Box \  H$ is sdp  if and only if both $G$ and $H$ are sdp.
 \end{thm}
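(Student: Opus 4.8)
The plan is to prove both directions by directly manipulating sdp sequences, using Lemma~\ref{lemma:2} as the main engine.

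For the forward direction ($G\ \Box\ H$ sdp implies $G$ and $H$ sdp), I would take an sdp sequence $(w_1,\dots,w_{|G||H|})$ for $G\ \Box\ H$ and try to ``project'' it onto $G$. The subtlety is that a single column $\{v\}\times V(H)$ is not removed all at once by an initial segment of the sequence, so a naive projection need not give an sdp sequence. Instead I would argue as follows: fix a vertex $x_0\in V(H)$ and consider the copy $G_{x_0}:=G\ \Box\ \{x_0\}\cong G$. For $s\ge 0$ let $A_s\subseteq V(G)$ be the set of $u$ such that $(u,x_0)$ lies among the first $s$ deleted vertices. I want to show each $A_s\in\DP'(G)$. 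This does not follow immediately because the deleted set $\{w_1,\dots,w_s\}$ need not be a product set. The cleanest route is probably to instead invoke Lemma~\ref{lemma:1} together with the known fact that $G$ is sdp iff it has a strictly increasing chain of isometric subgraphs by single-vertex deletions: take the sdp chain $G\ \Box\ H = K_0 \ge K_1 \ge \cdots$ and intersect with the layer $G_{x_0}$. An isometric subgraph of $G\ \Box\ H$ intersected with an isometric (in fact convex) layer is isometric, so each $K_i\cap G_{x_0}$ is isometric in $G_{x_0}\cong G$; ordering vertices of $G$ by the step at which $(u,x_0)$ disappears from this chain gives an sdp sequence for $G$. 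The same works for $H$.

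For the backward direction ($G$ and $H$ sdp implies $G\ \Box\ H$ sdp), I would build an sdp sequence for the product by ``peeling'' $H$-layers using an sdp sequence for $H$, and within the last surviving layer peeling using an sdp sequence for $G$. Concretely, let $(a_1,\dots,a_m)$ be an sdp sequence for $G$ and $(b_1,\dots,b_n)$ an sdp sequence for $H$. Order $V(G)\times V(H)$ so that we first delete all of $V(G)\times\{b_1\}$, then all of $V(G)\times\{b_2\}$, and so on through $V(G)\times\{b_{n-1}\}$, and finally, inside the last layer $V(G)\times\{b_n\}$, delete in the order $(a_1,b_n),(a_2,b_n),\dots,(a_m,b_n)$. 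I must check every initial segment gives an isometric subgraph. There are two kinds of initial segments. While we are still deleting whole $H$-layers, after removing $V(G)\times\{b_1,\dots,b_j\}$ the remaining graph is $G\ \Box\ (H-\{b_1,\dots,b_j\})$; this is isometric in $G\ \Box\ H$ by Lemma~\ref{lemma:1}, since $H-\{b_1,\dots,b_j\}\le H$ (sdp sequence) and $G\le G$ trivially. In the middle of a layer the deleted set is $\bigl(V(G)\times\{b_1,\dots,b_{n-1}\}\bigr)\cup\bigl(\{a_1,\dots,a_i\}\times\{b_n\}\bigr)$, which I would handle by writing the deletion as a product-set removal followed by one more removal and applying Lemma~\ref{lemma:2} in stages: first $V(G)\times\{b_1,\dots,b_{n-1}\}$ is a removal set by Lemma~\ref{lemma:2} (it is $V(G)\times B$ with $B\in\DP'(H)$ and $V(G)\in\DP'(G)$), leaving $G\ \Box\ \{b_n\}\cong G$; then removing $\{a_1,\dots,a_i\}$ from this copy is a removal set because $(a_j)$ is an sdp sequence, and one checks this local removal does not create shortcuts through the already-deleted region. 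This last check—that removals which are isometric ``layer by layer'' compose to an isometric removal in the product—is the heart of the argument and is exactly what Lemmas~\ref{lemma:1} and~\ref{lemma:2} are designed to supply; the decomposition $(G\ \Box\ H)-(A\times B)=((G-A)\ \Box\ H)\cup(G\ \Box\ (H-B))$ from the proof of Lemma~\ref{lemma:2} will be the key bookkeeping identity.

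I expect the main obstacle to be the forward direction: transferring an sdp sequence of the product to each factor, since initial segments of the product's sdp sequence are generally not product sets, so one cannot apply Lemma~\ref{lemma:2} directly. The resolution—restricting the isometric chain to a single layer and using that a layer is convex (hence the restriction of an isometric subgraph stays isometric)—needs to be stated carefully, perhaps as a small auxiliary observation before the main proof. The backward direction is more mechanical once Lemma~\ref{lemma:2} and the union decomposition are in hand, though one still has to verify that concatenated geodesics through the two ``halves'' of the complement stay shortest, exactly as in the proof of Lemma~\ref{lemma:2}.
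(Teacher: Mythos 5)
Your forward direction is essentially the paper's argument: the paper likewise fixes $x\in V(H)$, takes the subsequence of the product's sdp sequence lying in the layer $G\ \Box\ \{x\}$, and uses Proposition~\ref{proposition:1}(b) to see that a geodesic of $G\ \Box\ H$ between two vertices of that layer never leaves it (your ``layers are convex'' observation), so that intersecting the isometric chain with the layer produces an sdp sequence for $G$. That half is correct and needs no repair.

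The backward direction, however, has a concrete gap. An sdp sequence must make \emph{every} initial segment's removal isometric, but you specify the internal deletion order only for the last layer $V(G)\times\{b_n\}$, and you verify only two kinds of initial segments: unions of complete layers, and partial segments of that last layer. The generic case --- the complete layers $V(G)\times\{b_1,\dots,b_{j-1}\}$ together with a \emph{partial} intermediate layer, $j<n$ --- is neither specified nor checked, and it cannot be left arbitrary. For example, if $G=P_3$ with middle vertex $a_2$ and the (unspecified) internal order of the first layer happens to begin with $(a_2,b_1)$, then already $(G\ \Box\ H)-\{(a_2,b_1)\}$ fails to be isometric, since $d\bigl((a_1,b_1),(a_3,b_1)\bigr)$ jumps from $2$ to $4$. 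The repair is exactly the pattern you use for the last layer, applied uniformly: delete each layer $V(G)\times\{b_j\}$ in the order $(a_1,b_j),\dots,(a_m,b_j)$. Then a general initial segment removes $\bigl(V(G)\times\{b_1,\dots,b_{j-1}\}\bigr)\cup\bigl(\{a_1,\dots,a_i\}\times\{b_j\}\bigr)$; Lemma~\ref{lemma:1} gives $G\ \Box\ (H-\{b_1,\dots,b_{j-1}\})\le G\ \Box\ H$, Lemma~\ref{lemma:2} applied inside that product with $A=\{a_1,\dots,a_i\}\in\DP'(G)$ and $B=\{b_j\}\in\DP'\bigl(H-\{b_1,\dots,b_{j-1}\}\bigr)$ handles the partial layer, and transitivity of $\le$ finishes. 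This is precisely the paper's interleaved sequence $(u_1,v_1),\dots,(u_n,v_1),(u_1,v_2),\dots,(u_n,v_m)$, so once the ordering is fixed this way your argument coincides with theirs.
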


\begin{proof} 
For the forward direction, we will prove that $G$ is sdp, the proof for $H$ being similar.  Take an sdp sequence of vertices for $G\ \Box\ H$.  Fix $x\in H$ and consider the subsequence $(u_1,x),(u_2,x),\dots,(u_n,x)$ where 
$n=|G|$.  We claim that $u_1,u_2,\dots,u_n$ is an sdp sequence for $G$.  Indeed, let
$G'=G-\{u_i\}_{i=1}^s$  and let $K'$ be $G\ \Box\ H$ with the vertices through $(u_s,x)$ removed so that
$G'\ \Box\ \{x\}\subseteq K'$. Now if $v,w\in V(G')$ then, by  Proposition~\ref{proposition:1}(b), $P$ is a $v$--$w$ geodesic in $G'$ if and only if $P\ \Box\ \{x\}$ is a $(v,x)$--$(w,x)$ geodesic in $K'$.  From this fact, the sdp property of the original sequence, and Proposition~\ref{proposition:1}(a) we obtain
$$
d_{G'}(v,w)=d_{K'}\bigl((v,x),(w,x)\bigl)=d_{G\ \Box\ H}\bigl((v,x),(w,x)\bigl)=d_G(v,w)
$$
as desired.

For the converse, suppose that if $u_1,\dots,u_n$ and $v_1,\dots,v_m$ are sdp sequences for $G$ and $H$, respectively.  Then it follows easily from  Lemma~\ref{lemma:2} and the transitivity of the isometric subgraph relation that 
$$
(u_1,v_1),\dots,(u_n,v_1),(u_1,v_2),\dots,(u_n,v_2),\dots,(u_1,v_m),\dots,(u_n,v_m)
$$
is an sdp sequence for  $G\ \Box\ H$.
\end{proof}

The relationship between Cartesian product and the dp property seems more delicate.  In particular, we note that
 $G\ \Box\ H$ can be dp even though  $G$ or $H$ may not be.
As an example suppose a graph $G$ consists of the cycle $C_7$ with a pendant edge and $H$ is the path $P_2$.  It is easy to see that $G$ does not have any isometric subgraph of order $5$.  But using Lemma~\ref{lemma:2} one can prove that $G\ \Box \ H$ is dp. Computations suggest  the following conjecture.  

\begin{conj}
If $G$ and $H$ are dp then so is $G\ \Box\  H$.
\end{conj}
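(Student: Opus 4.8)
\medskip
\noindent\textbf{A plan towards the conjecture.}
Set $m=|G|$ and $n=|H|$; the goal is to show $\dpp(G\ \Box\ H)=[1,mn]$. Lemma~\ref{lemma:1} together with $\dpp(G)=[1,m]$ and $\dpp(H)=[1,n]$ already puts every ``product order'' $ij$ with $1\le i\le m$ and $1\le j\le n$ into $\dpp(G\ \Box\ H)$ (take $G'\ \Box\ H'$ with $G'\le G$, $H'\le H$, $|G'|=i$, $|H'|=j$), and orders at most $n$ are handled by subgraphs $\{w\}\ \Box\ H'$ with $w\in V(G)$ and $H'\le H$. The difficulty lies in the remaining orders --- for instance, a prime larger than both $m$ and $n$ (and at most $mn$) is not a product $ij$ of this kind --- so one needs isometric subgraphs of $G\ \Box\ H$ that are not themselves Cartesian products.

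\noindent The subgraph I would try to build is a ``grid plus one partial layer.'' Suppose that for each $q$ with $1\le q\le m-1$ the graph $G$ has isometric subgraphs $A_q\le B_q\le G$ with $|A_q|=q$ and $|B_q|=q+1$ (the pairs for different $q$ need not lie in a common chain); taking these induced, $V(B_q)=V(A_q)\cup\{w_q\}$ and $B_q-w_q=A_q$. Fix a target order $k\in(n,mn]$. If $n\mid k$, take $K=G'\ \Box\ H$ for an isometric $G'\le G$ of order $k/n$ (available since $G$ is dp); then $K\le G\ \Box\ H$ by Lemma~\ref{lemma:1}. Otherwise write $k=qn+r$ with $1\le q\le m-1$ and $1\le r\le n-1$, pick an isometric $H_r\le H$ with $|H_r|=r$, and let $K$ be the subgraph of $G\ \Box\ H$ on the vertex set
$$
\bigl(V(A_q)\times V(H)\bigr)\cup\bigl(\{w_q\}\times V(H_r)\bigr),
$$
so that $K=(B_q\ \Box\ H)-\bigl(\{w_q\}\times(V(H)\setminus V(H_r))\bigr)$. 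Since $B_q-w_q=A_q\le B_q$ we have $\{w_q\}\in\DP'(B_q)$, and since $H_r\le H$ we have $V(H)\setminus V(H_r)\in\DP'(H)$, so Lemma~\ref{lemma:2} gives $\{w_q\}\times(V(H)\setminus V(H_r))\in\DP'(B_q\ \Box\ H)$, i.e.\ $K\le B_q\ \Box\ H$; as $B_q\le G$ forces $B_q\ \Box\ H\le G\ \Box\ H$ by Lemma~\ref{lemma:1}, transitivity of the isometric-subgraph relation gives $K\le G\ \Box\ H$. Since $|K|=qn+r=k$, every order in $[1,mn]$ would be realized.

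\noindent The crux is thus the hypothesis that $G$ has, at each rank $q\in[1,m-1]$, an isometric subgraph of order $q+1$ containing an isometric subgraph of order $q$. Requiring a single chain $G_1\subsetneq G_2\subsetneq\cdots\subsetneq G_m=G$ of isometric subgraphs would just say that $G$ is sdp, which is strictly stronger than dp and hence not available; only the rank-by-rank version is needed. Part of it is automatic: an isometric subgraph may always be replaced by the induced subgraph on the same vertices, so a dp graph has an isometric subgraph $G-v$ of order $m-1$, which settles the rank $q=m-1$. The open point is the intermediate ranks --- it is not clear that being dp forces, at every rank, an isometric subgraph that extends by one vertex to another isometric subgraph. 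I expect this to be the main obstacle. Resolving the conjecture seems to require either proving this partial-chain property for dp graphs, or else covering the orders that an incomplete chain misses by other families of isometric subgraphs of $G\ \Box\ H$ --- for example ``staircase''-shaped subgraphs $\bigcup_i\bigl(\{g_i\}\times V(H_{a_i})\bigr)$ built from an isometric subgraph of $G$ and a nested family $H_{a_1}\le H_{a_2}\le\cdots$ in $H$, or the removal-set subgraphs $(G\ \Box\ H)-(A\times B)$ of orders $mn-|A||B|$ provided by Lemma~\ref{lemma:2} with $|A|\in\ddp'(G)$ and $|B|\in\ddp'(H)$. Controlling distances in such subgraphs, and checking that together they hit every order in $[1,mn]$, is where the real work lies.
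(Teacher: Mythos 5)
First, note that the statement you were asked about is stated in the paper only as a conjecture; the authors give no proof, so there is nothing to compare your argument against. What can be assessed is whether your plan actually closes the problem, and it does not --- though the parts you do prove are correct, and you have honestly flagged the gap.

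The correct parts: Lemma~\ref{lemma:1} does give every order $ij$ with $i\in\dpp(G)$, $j\in\dpp(H)$, and your ``grid plus one partial layer'' construction is a valid application of Lemma~\ref{lemma:2} inside $B_q\ \Box\ H$ followed by Lemma~\ref{lemma:1} and transitivity; if the nested pairs $A_q\le B_q$ with $|B_q|=|A_q|+1$ existed at every rank $q\in[1,m-1]$, the conjecture would follow. The gap is precisely that hypothesis. Being dp only guarantees that $\dpp(G)=[1,m]$; it says nothing about how the isometric subgraphs of orders $q$ and $q+1$ sit relative to one another, and there is no reason (and no argument in your proposal) why some isometric subgraph of order $q+1$ must contain an isometric subgraph of order $q$ obtained by deleting a single vertex. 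The property you need --- every rank admits such a one-step extension --- is a genuinely new condition strictly between dp and sdp, and proving that dp implies it would itself be a nontrivial structural theorem about dp graphs; nothing in the paper's toolkit (Proposition~\ref{proposition:1}, Lemmas~\ref{lemma:1} and~\ref{lemma:2}) bears on it. Your fallback suggestions (staircase subgraphs, removal sets $A\times B$) face the same difficulty: each produces only certain orders, and without some chain or nesting structure in $G$ or $H$ there is no argument that the union of the realized orders covers all of $[1,mn]$. So the proposal is a reasonable reduction of the conjecture to a sharper open question, but it is not a proof, and the missing step is the heart of the matter rather than a technicality.
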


\bibliography{references}
\bibliographystyle{abbrv}

\end{document}